\def\rdimen{$r-$di\-men\-sional }
\def\mo{M\"obius }
\def\hatc{\hat{\mathbb{C}}}
\def\rn{{\hat{\mathbb{R}}^n}}
\def\rnn{\mathbb{R}^n}
\def\hn{\mathbb{H}^n}
\newtheorem{Thm}{Theorem}
\newtheorem{lem}{Lemma}
\newtheorem{theo}{Theorem}
\newtheorem{conj}{Conjecture}
\newtheorem{open}{Problem}
\theoremstyle{remark}
\newtheorem{rem}{\textbf{Remark}}
\begin{document}

\title{\bf{Fundamental theorem of hyperbolic geometry without the injectivity assumption}}

\author{ GUOWU YAO\\
  Department of Mathematical Sciences,
  Tsinghua University\\Beijing, 100084, P.R. China\\
e-mail: \texttt{gwyao@math.tsinghua.edu.cn} }

\date{}
\maketitle\renewcommand\abstract{\noindent\large{A}\small{BSTRACT.
\;}}
\begin{abstract}Let $\hn$ be the $n-$dimensional hyperbolic space.
It is well known that, if $f:\;\hn\to \hn$ is a bijection that
preserves $r-$dimensional hyperplanes, then  $f$ is an isometry. In
this paper we make neither  injectivity nor $r-$hyperplane
preserving assumptions on $f$ and prove the following result:
 \\
\indent Suppose that $f:\;\hn\to \hn$ is a surjective map
 and
maps an $r-$hyperplane  into an $r-$hyperplane, then $f$ is an
isometry.

 The Euclidean
version was obtained by A. Chubarev and I. Pinelis in 1999 among
other things. Our proof is essentially different from their and the
similar problem arising in the spherical case is open.

\end{abstract}

 \footnote{{2000 \it{Mathematics Subject
Classification.}} Primary 37B05, 30C35; Secondary 51F15.}
\footnote{{\it{Key words and phrases.}} M\"obius transformation,
affine transformation, isometric transformation.}
 \footnote{The  author was  supported by  a Foundation
for the Author of National Excellent Doctoral Dissertation (Grant
No. 200518) and the National Natural Science Foundation of China.}

\begin{centering}\section{\!\!\!\!\!{. }
Introduction}\label{S:intr}\end{centering}

Let $\rn=\mathbb{R}^n\cup\{\infty\}$ where $\rnn$ is the
$n-$di\-men\-sional Euclidean space and let $\mathbb{H}^n$ be the
$n-$di\-men\-sional hyperbolic space. A map $f$ of  $\rn$ to itself
is called $r-$sphere  preserving  if $f$ maps an \rdimen
 sphere  onto an \rdimen  sphere. Similarly, a
map $f$ of  $\rnn$ (or $\hn$) to $\rnn$ (or $\hn$) is called
$r-$hyperplane preserving  if  $f$ maps an  \rdimen hyperplane onto
an \rdimen hyperplane in $\rnn$ (or $\hn$). In particular, when
$r=1$, we call the corresponding map $f$ to be a circle-preserving
(line-preserving, geodesic-preserving) map in $\rn\,(\rnn,\, \hn)$,
respectively. In the sequel, we prescribe $n\geq2$ and $1\leq r<n$.

The property of a  \mo transformation acting on $\hatc$ is so clear
and the relations between  \mo transformation and some of its
property have been extensively studied.  For examples,
Carath\'eodory first proved that if $f:\,\hatc\to\hatc$ is a
circle-preserving bijection, then $f$ is a \mo transformation (see
\cite{Car} or \cite{Sch});
 Nehari
\cite{Ne} showed that if $f:\,\hatc\to\hatc$ is a non-constant
meromorphic function that preserves circles, then $f$ is a \mo
transformation.

Of course, the analogous problem for affine (or isometric)
transformations on $\rnn$ (or $\hn$) is also concerned. In
 \cite{Je}, Jeffers obtained the following extension of Carath\'eodory's result to all three cases (for concision, we combine
three theorems obtained by him into one).
\begin{theo}\label{Th:AM}
Suppose that $f:\,\rn\to\rn$ ($\rnn\to\rnn$, $\hn\to \hn$) is a
bijection that preserves \rdimen spheres (\rdimen hyperplanes). Then
$f$ is a(n) \mo (affine, isometric) transformation.
\end{theo}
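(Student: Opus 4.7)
My plan is to treat the three assertions---M\"obius on $\rn$, affine on $\rnn$, and isometric on $\hn$---through a common two-stage strategy: reduce $r$-sphere (or $r$-hyperplane) preservation to the case $r=1$, and then invoke the fundamental theorem of affine or projective geometry in an appropriate model.

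\textbf{Stage one: reduction to $r=1$.} By downward induction on $r$ I show that every $r$-sphere preserving bijection is automatically $(r-1)$-sphere preserving. The mechanism is that any $(r-1)$-sphere $C$ occurs as $C=S_{1}\cap S_{2}$ for two distinct $r$-spheres $S_{1},S_{2}$; bijectivity of $f$ forces $f(S_{1})\neq f(S_{2})$, so $f(C)=f(S_{1})\cap f(S_{2})$, and since two distinct $r$-spheres meeting in more than one point intersect in an $(r-1)$-sphere, $f(C)$ is itself an $(r-1)$-sphere. Surjectivity of $f$ combined with the same incidence fact applied to $f^{-1}$ gives the onto property for $(r-1)$-spheres. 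The identical argument, with $r$-spheres replaced by affine $r$-hyperplanes of $\rnn$ or totally geodesic $r$-flats of $\hn$, handles the other two settings. At the end of this stage $f$ is a circle-, line-, or geodesic-preserving bijection, respectively.

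\textbf{Stage two: the $r=1$ case.} In the M\"obius case I post-compose with a M\"obius transformation to arrange $f(\infty)=\infty$, so that $f$ sends circles through $\infty$ (i.e., affine lines) to circles through $\infty$; its restriction to $\rnn$ is then a line-preserving bijection. The fundamental theorem of affine geometry makes this restriction semi-affine, and the absence of nontrivial field automorphisms of $\mathbb{R}$ makes it affine. The same affine map must also send Euclidean circles to Euclidean circles, and a singular-value argument shows that an affine bijection with this property is conformal on every $2$-plane, hence a similarity; so $f$ is a M\"obius transformation. The Euclidean affine case is the same argument without the compactification. For the hyperbolic case I pass to the Klein model $B^{n}\subset\rnn$, in which hyperbolic $r$-flats are precisely the nonempty $r$-dimensional Euclidean affine sections of $B^{n}$; after reduction to $r=1$, $f$ sends chords of $B^{n}$ to chords, and a fundamental-theorem-of-projective-geometry argument on the open convex set $B^{n}$ identifies $f$ with the restriction of a projective automorphism preserving $B^{n}$, i.e., a Klein isometry.

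\textbf{Main obstacle.} The step I expect to be most delicate is upgrading \emph{affine} to \emph{similarity} in the M\"obius case: the fundamental theorem of affine geometry alone delivers an arbitrary invertible affine map, and I must extract Euclidean conformality from the sole remaining hypothesis that Euclidean circles go to Euclidean circles. Tracking which circles pass through $\infty$ and verifying preservation of round spheres within each $2$-plane is the natural route, but this requires care when $n\geq 3$, since a single circle no longer determines the conformal class and one must exploit the full family of circles in every coordinate direction.
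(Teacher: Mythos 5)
A preliminary remark on the comparison: the paper offers no proof of this statement at all --- it is Jeffers's theorem, imported from \cite{Je} as background --- so the only internal point of reference is Lemma~\ref{Th:lylem11}, whose ``intersect two $r$-hyperplanes'' device is exactly your Stage one, but whose conclusion is deliberately the weaker ``maps $\Gamma$ \emph{into} a $k$-dimensional hyperplane''. That weakening is not cosmetic, and it marks a genuine gap in your proposal. You assert that two distinct $r$-spheres meeting in more than one point intersect in an $(r-1)$-sphere. This fails whenever $n>r+1$: the intersection of two distinct $r$-spheres is a $k$-sphere for some $k\le r-1$, a single point, or empty, and every value of $k$ occurs. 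For instance, the unit $2$-sphere of the hyperplane $\{x_4=0\}$ in $\mathbb{R}^4$ and the $2$-sphere $\{x_3=0,\ (x_1-\tfrac12)^2+x_2^2+x_4^2=1\}$ meet in exactly two points; likewise the $x_1x_2$-plane and the $x_3x_4$-plane in $\mathbb{R}^4$ meet in a single point. Hence from $f(C)=f(S_1)\cap f(S_2)$ with $f(S_1)\ne f(S_2)$ you may conclude only that $f(C)$ is a $k$-sphere (or $k$-plane) with $1\le k\le r-1$, the lower bound coming from injectivity since $f(C)$ has continuum cardinality --- not that $k=r-1$. The very first step of your downward induction is therefore inconclusive for every $r\ge 3$ (so for all $n\ge4$), in each of the three geometries: nothing in the argument rules out $f$ collapsing every $(r-1)$-sphere into, say, a circle. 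Closing this requires an additional idea --- for example exploiting that $f|_{S_1}$ is a bijection of $S_1$ onto the $r$-sphere $f(S_1)$ and analysing the whole pencil of subspheres through $C$, which is in the spirit of what the paper's Lemmas \ref{Th:thm2lem2}--\ref{Th:thm2lem4} must do (in the harder non-injective setting) to get from ``into'' back to ``onto''.

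By contrast, your Stage two is sound in outline and is the standard route: after normalizing $f(\infty)=\infty$, the fundamental theorem of affine geometry plus the rigidity of $\mathbb{R}$ gives an affine map, forcing all singular values to coincide because round circles go to round circles gives a similarity, and the Klein-model projective argument gives a hyperbolic isometry. For $r\le 2$ (hence for all of $n=2,3$) your Stage one also goes through, since there $1\le k\le r-1$ already pins down $k=r-1$. The proposal therefore stands or falls with repairing the reduction step for $r\ge3$.
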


An $r-$sphere preserving  map $f$ is called degenerate if its image
$f(\rn)$ is an \rdimen  sphere; otherwise, $f$ is called
non-degenerate. The reader will easily guess  the proper definitions
for non-degenerate and degenerate maps in the Euclidean and
hyperbolic settings. In a recent article \cite{LW}, B. Li and Y.
Wang made neither injectivity nor surjectivity  assumptions on $f$
and proved
\begin{theo}\label{Th:ly0}
Suppose that $f:\,\rn\to\rn$ ($\rnn\to\rnn$, $\hn\to \hn$)  is a
circle-preserving (line-preserving, geodesic-preserving) map. Then
$f$ is a(n) \mo (affine, isometric)  transformation if and only if
$f$ is non-degenerate.
\end{theo}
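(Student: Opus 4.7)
The plan is to reduce Theorem~\ref{Th:ly0} to Theorem~\ref{Th:AM} by upgrading non-degeneracy to bijectivity, whereupon Jeffers's result applies directly. The "if" direction is immediate: any \mo, affine, or isometric transformation is already a bijection of the ambient space onto itself, so its image cannot be contained in a single circle, line, or geodesic. I describe the "only if" argument in the $\hatc$ version; the $\rnn$ and $\hn$ cases are parallel, with pencils of lines or of geodesics replacing pencils of circles.

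The first step is injectivity. Suppose, for contradiction, that $f(a)=f(b)=p$ with $a\neq b$. The pencil of circles through $a$ and $b$ foliates $\hatc\setminus\{a,b\}$, and every such circle is sent to a circle through $p$. Introducing an auxiliary point $c$ with $f(c)\neq p$ and a generic fourth point $d$, the circles through $\{a,c,d\}$ and through $\{b,c,d\}$ map to two circles each containing $p$, $f(c)$, and $f(d)$; since three points determine a circle in $\hatc$, these image circles coincide, and iterating this identification across pencils forces $f(\hatc)$ to lie in a single circle, contradicting non-degeneracy.

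Next, surjectivity. Injectivity together with non-degeneracy supplies four image points in general position, and preservation places the circle through any three of them inside $f(\hatc)$. I would then show that the restriction of $f$ to each such circle is onto its image circle, by normalising three known image points to $0,1,\infty$ via a post-composed \mo transformation and propagating surjectivity through intersecting pencils until it saturates $\hatc$. With $f$ now a bijection preserving circles, Theorem~\ref{Th:AM} concludes. The main obstacle I anticipate is exactly this last surjectivity step, since circle-preservation plus injectivity does not a~priori force surjectivity on individual circles; a secondary subtlety in $\hn$ is that two geodesics need not intersect, so pencils through a pair of finite points must sometimes be replaced by pencils sharing a common ideal endpoint, and the preservation hypothesis must be verified through the passage to $\partial\hn$.
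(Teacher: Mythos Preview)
The paper does not contain a proof of Theorem~\ref{Th:ly0}. This result is quoted from B.~Li and Y.~Wang~\cite{LW} as background in the Introduction, and no argument for it is given here; the paper's own contributions begin with Theorem~\ref{Th:lw} and the reduction lemmas in Sections~\ref{S:reduction}--\ref{S:conj2}. So there is no ``paper's own proof'' to compare your proposal against.

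That said, your sketch has real gaps independent of this. First, note that in this paper ``circle-preserving'' means $f$ maps every circle \emph{onto} a circle, not merely into one; you never exploit this, and it is what makes the Li--Wang argument go. Second, your injectivity step does not close: from $f(a)=f(b)=p$ you observe that the circles through $\{a,c,d\}$ and $\{b,c,d\}$ have the same image (the circle through $p,f(c),f(d)$), but that conclusion is tautological---of course $f(d)$ lies on a circle through $p,f(c),f(d)$---and you have not explained why ``iterating across pencils'' collapses the whole image to a single circle. Third, you yourself flag the surjectivity step as the main obstacle and leave it as a hope; the normalisation-and-propagation idea is not an argument. Finally, your remark about $\hn$ pencils sharing an ideal endpoint presupposes that $f$ extends sensibly to $\partial\hn$, which is not part of the hypothesis and would itself need justification. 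If you want to supply a self-contained proof, you should consult \cite{LW} directly; the mechanism there is different from a reduction to Theorem~\ref{Th:AM}.
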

The existence of degenerate maps was shown in \cite{LW,Yao}. More
recently, the author joint with B. Li \cite{LY} obtained the
following generalization of Theorem \ref{Th:ly0}.
\begin{theo}\label{Th:lw}Suppose that $f:\,\rn\to\rn$ ($\rnn\to\rnn$, $\hn\to \hn$)
 is an $r-$sphere preserving ($r-$hyperplane preserving) map. Then $f$ is a(n) \mo  (affine, isometric)
 transformation if and
only if $f$ is non-degenerate.
\end{theo}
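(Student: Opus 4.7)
The theorem is a biconditional. The ``only if'' direction is straightforward: a M\"obius (affine, isometric) transformation is bijective, so its image is the full ambient space and is not contained in any \rdimen sphere or hyperplane, and hence $f$ is non-degenerate.

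For the ``if'' direction, assume $f$ is $r$-sphere preserving (respectively $r$-hyperplane preserving) and non-degenerate. The plan is descending induction on $r$. The base case $r=1$ is precisely Theorem \ref{Th:ly0}; for the inductive step I would show that $f$ is also $(r-1)$-sphere preserving (respectively $(r-1)$-hyperplane preserving) and remains non-degenerate at that level. To carry this out, fix an $(r-1)$-sphere $T$. A basic geometric observation is that every point off $T$ together with $T$ determines a unique $r$-sphere, so the union of the $r$-spheres containing $T$ is the entire ambient space. Consequently, if every such $r$-sphere were sent by $f$ to a single $r$-sphere $S^*$, then $f(\rn)\subseteq S^*$ would contradict non-degeneracy. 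Hence there exist $S_1\neq S_2$ through $T$ with $f(S_1)\neq f(S_2)$, giving $f(T)\subseteq f(S_1)\cap f(S_2)$, a set of dimension at most $r-1$.

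The hardest step is upgrading this inclusion to the assertion that $f(T)$ \emph{is} a full $(r-1)$-sphere. Two obstructions must be controlled: $f(S_1)\cap f(S_2)$ can a priori have dimension strictly less than $r-1$ (if the two image $r$-spheres are tangent, or in the Euclidean/hyperbolic setting parallel or ultraparallel), and even when it is a genuine $(r-1)$-sphere, $f|_T$ may fail to surject onto it. My approach is to vary the second $r$-sphere across the whole pencil of $r$-spheres through $T$, exploiting the abundance of such spheres together with non-degeneracy applied one dimension lower inside a fixed image $f(S_1)$ (via the same reduction performed internally to $f(S_1)$), in order to force the intersections $f(S_1)\cap f(S)$ to be full $(r-1)$-spheres for a large family of $S$ and to force $f|_T$ to cover one of them. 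Once $(r-1)$-sphere preservation together with the corresponding non-degeneracy are established, the induction step is complete; descent to $r=1$ and appeal to Theorem \ref{Th:ly0} then closes the argument.
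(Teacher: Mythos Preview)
The paper does not prove this theorem; it is quoted from the companion article \cite{LY} and then used as a black box (see the last line of Section~\ref{S:conj2}, where Theorem~\ref{Th:conjthm2} is obtained by combining Lemma~\ref{Th:thm2lem4} with Theorem~\ref{Th:lw}). So there is no in-paper argument to compare your proposal against.

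On the substance of your sketch: the ``only if'' direction and the first reduction step are correct, and indeed the observation that $f(T)\subseteq f(S_1)\cap f(S_2)$ for two $r$-spheres with distinct images is exactly the mechanism behind Lemma~\ref{Th:lylem11} here (note that lemma only yields the ``into a $k$-object'' conclusion, never ``onto''). Your proposal, however, stops at the point you yourself flag as hardest. The phrase ``non-degeneracy applied one dimension lower inside a fixed image $f(S_1)$ (via the same reduction performed internally to $f(S_1)$)'' is circular as written: to run the reduction on $f|_{S_1}:S_1\to f(S_1)$ you would need $f|_{S_1}$ to be $(r-1)$-sphere preserving, which is precisely what you are trying to establish. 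Likewise, varying $S$ over the whole pencil through $T$ only tells you that $f(T)$ lies in many intersections $f(S_1)\cap f(S)$; this gives upper bounds on $f(T)$ but no lower bound, and hence no surjectivity onto any particular $(r-1)$-sphere. Without a concrete mechanism forcing $f(T)$ to fill out an $(r-1)$-sphere, the descending induction does not close, and what you have is a plan rather than a proof.
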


In \cite{CP}, Chubarev and Pinelis  showed, among other things, that
the injective condition for the Euclidean case $\rnn$ in Theorem
\ref{Th:AM} can be removed. Precisely, the following theorem was
implied.
\begin{theo}\label{Th:lyee}
Suppose that $f:\,\rnn\to\rnn$ is a surjective map  and  maps every
\rdimen hyperplane into an \rdimen  hyperplane. Then $f$ is an
affine transformation.
\end{theo}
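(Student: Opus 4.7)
The natural strategy is to reduce Theorem~\ref{Th:lyee} to Theorem~\ref{Th:lw}, which already characterizes affine maps as the non-degenerate $r$-hyperplane preserving ones---where ``preserving'' means mapping an $r$-hyperplane \emph{onto} an $r$-hyperplane. Two ingredients must be supplied in the weaker setup of Theorem~\ref{Th:lyee}: first, non-degeneracy of $f$, and second, the upgrade of the hypothesis from ``maps into'' to ``maps onto'' on $r$-hyperplanes.

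Non-degeneracy is immediate, since $f$ is surjective and $r<n$: $f(\rnn)=\rnn$ cannot be contained in any $r$-hyperplane. The crux is therefore to prove that, for every $r$-hyperplane $P$, the image $f(P)$ is an entire $r$-hyperplane. I would argue by contradiction: assume $f(P)\subsetneq P'$ for some $r$-hyperplane $P'\supset f(P)$ and pick $y_0\in P'\setminus f(P)$. By surjectivity, $y_0=f(x_0)$ with $x_0\notin P$. One then exploits the abundance of $r$-hyperplanes $Q$ through $x_0$ meeting $P$ in an $(r-1)$-dimensional affine subspace $S$: for each such $Q$ one has $f(Q)\subset Q'$ for some $r$-hyperplane $Q'\ni y_0$, and $f(S)\subset f(Q)\cap f(P)\subset Q'\cap P'$. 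Varying $Q$ through the pencil of such $r$-hyperplanes and tracking the incidences these force inside $P'$ should, by a careful bookkeeping, either pin $y_0$ into $f(P)$ directly or contradict surjectivity of $f$ elsewhere in $\rnn$.

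To make this argument clean, two preliminary reductions will help. First, one should check that $f(P)$ affinely spans an $r$-hyperplane, so that $P'$ is uniquely determined by $P$ and there is a well-defined induced map $\Phi\colon P\mapsto P'$ on the family of $r$-hyperplanes. Second, $\Phi$ should be shown to respect parallelism and transverse intersections at least generically (parallel $P_1,P_2$ going to parallel or coincident $P_1',P_2'$, and transverse $P_1\cap P_2$ mapping into $P_1'\cap P_2'$). Once this skeleton is in place, the covering $\rnn=\bigcup_t P_t$ by a parallel family of $r$-hyperplanes, combined with surjectivity $f(\rnn)=\rnn$, gives a global counting/covering constraint that is incompatible with any proper inclusion $f(P_t)\subsetneq P'_t$.

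I expect the main obstacle to be precisely this upgrade from ``into'' to ``onto''. The bare hypothesis permits considerable collapsing inside any particular $P$, and without any continuity or injectivity on $f$ one must squeeze all the needed rigidity out of surjectivity together with the combinatorics of the incidence geometry of $r$-hyperplanes. Once the upgrade is secured, $f$ is non-degenerate and $r$-hyperplane preserving in the sense of Theorem~\ref{Th:lw}, and that theorem applies directly to conclude that $f$ is an affine transformation.
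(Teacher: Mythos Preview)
Your overall strategy matches the paper's: both aim to feed into Theorem~\ref{Th:lw} by upgrading ``maps into'' to ``maps onto'' on hyperplanes, with non-degeneracy being automatic from surjectivity. Where you diverge is in \emph{which} value of $r$ you attempt the upgrade.

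You try to prove the onto property directly for the given $r$, first via a pencil of $r$-planes through a preimage $x_0$ of a missing point $y_0\in P'\setminus f(P)$, and then via a parallel-family covering argument. Both steps are only sketched, and the covering argument is genuinely problematic for $r<n-1$: the parallel $r$-planes are parametrized by $\mathbb{R}^{n-r}$, and without knowing in advance that the induced map $\Phi$ on this parameter space is injective, nothing prevents several $P_t$'s from mapping into the \emph{same} $P'$ and jointly filling it. So a proper inclusion $f(P_t)\subsetneq P'_t$ is, on its face, compatible with surjectivity of $f$. Your pencil argument likewise does not yet yield a contradiction: knowing $y_0\in Q'\cap P'$ for each $Q$ says nothing about whether $y_0\in f(P)$.

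The paper sidesteps this by changing the value of $r$ before attempting the upgrade. It first reduces general $r$ to $r=1$ (lines) via the Euclidean analogue of Lemma~\ref{Th:lylem11}, then \emph{climbs back up} from lines to $(n-1)$-hyperplanes via the Euclidean analogue of Lemma~\ref{Th:thm2lem1}, and finally proves the into$\to$onto upgrade only at $r=n-1$. In codimension one the argument is a single stroke: if $p'\in\Gamma'\setminus f(\Gamma)$ with $f(p)=p'$ and $p\notin\Gamma$, then every line through $p$ either meets the $(n-1)$-plane $\Gamma$ (and is mapped into $\Gamma'$) or is parallel to $\Gamma$ (and lies in the single $(n-1)$-plane through $p$ parallel to $\Gamma$, hence is mapped into some fixed $(n-1)$-plane $\Lambda'$). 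Thus $f(\rnn)\subset\Gamma'\cup\Lambda'$, contradicting surjectivity. One then invokes Theorem~\ref{Th:lw} with $r=n-1$, not with the original $r$.

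So your plan is correct in spirit, but the crucial step is a genuine gap as written; the paper's detour through $r=1$ and then $r=n-1$ is precisely what makes the upgrade easy.
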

Inspired by Theorem \ref{Th:lyee}, the following two conjectures
were naturally posed in \cite{LY}:
\begin{conj}\label{Th:conj1}
Suppose that $f:\,\rn\to\rn$ is a surjective map  and  maps every
\rdimen sphere  into an \rdimen  sphere. Then $f$ is a \mo
transformation.
\end{conj}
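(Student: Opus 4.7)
The plan is to reduce this to Theorem~\ref{Th:lw} (the ``onto'' version) by upgrading the hypothesis from ``sends each \rdimen sphere into an \rdimen sphere'' to ``sends each \rdimen sphere onto an \rdimen sphere'', after which non-degeneracy is automatic from surjectivity, since $\rn$ with $r<n$ is not itself an \rdimen sphere. Thus the entire difficulty concentrates in the ``into $\Rightarrow$ onto'' upgrade.

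To attempt this upgrade I would argue as follows. Fix an \rdimen sphere $S$, and let $S'$ be an \rdimen sphere with $f(S)\subseteq S'$. For any $q\in S'$, surjectivity provides $p\in\rn$ with $f(p)=q$, and the goal is to force $p\in S$. I would test $p$ against $S$ by considering the pencil of \rdimen spheres through $p$ that meet $S$ in $(r-1)$-spheres; applying $f$ to such intersection configurations and tracking which points of $S'$ can arise should confine $q$ to $f(S)$ whenever $p\notin S$, yielding a contradiction. A parallel, and perhaps cleaner, strategy is to try to reduce to Theorem~\ref{Th:lyee}: if one could pre- and post-compose with \mo transformations so that $f(\infty)=\infty$ and $f^{-1}(\infty)=\{\infty\}$, then $f|_{\rnn}$ would be a surjective map of $\rnn$ sending every \rdimen hyperplane into an \rdimen hyperplane, and the Chubarev--Pinelis theorem would conclude the proof.

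A useful auxiliary step in either route is some weak form of injectivity. I would try to show that if $f(p_1)=f(p_2)=q$ with $p_1\neq p_2$, then the pencils of \rdimen spheres through $p_1$ and $p_2$ force a large family of \rdimen image spheres to share the point $q$; by counting the dimension of the admissible configurations, this should either confine $f$ to a proper subvariety (contradicting surjectivity) or collapse the two pencils into one (forcing $p_1=p_2$).

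The main obstacle is likely the very reason the author does not settle the spherical case in this paper: $\rn$ is compact and M\"obius-homogeneous, so one cannot distinguish a point at infinity and exploit non-compactness as in the Euclidean proof of Chubarev--Pinelis, nor a distinguished boundary as in the hyperbolic setting treated here. In the Euclidean and hyperbolic arguments, the non-compact direction supplies lines or geodesic rays along which limiting arguments pin down $f$; in the spherical setting every analogous argument must be carried out M\"obius-invariantly, and replacing the limiting step appears genuinely difficult. This is the point at which I expect a straightforward transfer of the Euclidean or hyperbolic proof to break down.
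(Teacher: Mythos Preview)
The paper does not prove Conjecture~\ref{Th:conj1}; it is explicitly left open, with only the case $r=n-1$ claimed in a separate preprint. So there is no paper proof to compare against, and the relevant question is whether your sketch closes the gap. It does not, and to your credit you essentially say so in your final paragraph.

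Both of your proposed routes stall at the same place. In the ``into $\Rightarrow$ onto'' route, you say that testing $p$ against $S$ via pencils of \rdimen spheres through $p$ ``should confine $q$ to $f(S)$ whenever $p\notin S$'', but you give no mechanism for this; the intersection of two \rdimen spheres in $\rn$ need not be an $(r-1)$-sphere, and even when it is, nothing forces the image configuration to be rigid enough to pin down $q$. In the reduction-to-Chubarev--Pinelis route, the entire content is packed into the clause ``$f^{-1}(\infty)=\{\infty\}$'': that is precisely a pointwise injectivity statement, and your dimension-counting sketch for weak injectivity (``should either confine $f$ to a proper subvariety \ldots\ or collapse the two pencils'') is not an argument, since $f$ is not assumed continuous or algebraic and no finiteness or dimension notion applies to its fibres.

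It is worth noting that the paper isolates a \emph{different} obstruction as the crux: the spherical analogue of Lemma~\ref{Th:thm2lem1}, namely whether a surjective circle-into-circle map on $\rn$ must send $(n-1)$-spheres into $(n-1)$-spheres (posed as Problem~1). The hyperbolic proof here runs by first reducing to $r=1$, then climbing back up to $r=n-1$ via that lemma, and only then upgrading ``into'' to ``onto''. Your strategy tries to upgrade at level $r$ directly or to escape to $\rnn$; the paper's experience suggests that the real difficulty is the climb from $r=1$ to $r=n-1$, where the hyperbolic argument uses that a geodesic through two points always meets the opposite face of a simplex---a fact with no clean spherical analogue.
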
 and
\begin{conj}\label{Th:conj2}
Suppose that $f:\,\hn\to\hn$ is a surjective map  and  maps every
\rdimen hyperplane into an \rdimen hyperplane. Then $f$ is an
isometric transformation. \end{conj}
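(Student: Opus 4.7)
My plan is to deduce Conjecture \ref{Th:conj2} from Theorem \ref{Th:lw}. Since $r<n$, any $r$-hyperplane is a proper subset of $\hn$, so surjectivity of $f$ automatically makes $f$ non-degenerate. Consequently, Theorem \ref{Th:lw} will finish the proof as soon as one strengthens the ``into'' hypothesis to an ``onto'' hypothesis, i.e.\ proves that $f$ sends each $r$-hyperplane \emph{onto} an $r$-hyperplane.

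To establish this strengthening I would argue by contradiction. Suppose some $r$-hyperplane $P_{0}$ satisfies $f(P_{0}) \subsetneq P_{0}^{\ast}$, choose $y_{0} \in P_{0}^{\ast} \setminus f(P_{0})$, and use surjectivity to pick $x_{0} \notin P_{0}$ with $f(x_{0})=y_{0}$. The idea is to exploit the rich pencil of $r$-hyperplanes $Q$ through $x_{0}$: each such $Q$ is sent into an $r$-hyperplane $Q^{\ast}$ containing $y_{0}$, and the incidence $Q \cap P_{0}$ is sent into $Q^{\ast} \cap P_{0}^{\ast}$. Sweeping $Q$ through a suitable sub-family and tracking these incidences should produce points of $f(P_{0})$ arbitrarily close to $y_{0}$; combined with a limit argument that uses surjectivity to exclude stray preimages, this ought to force $y_{0} \in f(P_{0})$, a contradiction.

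The delicate point is the complete absence of regularity for $f$: without injectivity or continuity, several $Q$'s may collapse to the same $Q^{\ast}$, and one cannot track images pointwise along $Q$. I would handle this by first establishing injectivity of $f$. If $f(x_{1})=f(x_{2})$ with $x_{1}\ne x_{2}$, then every $r$-hyperplane through the geodesic $\gamma$ joining $x_{1}$ and $x_{2}$ is sent into an $r$-hyperplane through the common image point; by arranging these image hyperplanes to be mutually transverse at that point, one forces $f$ to collapse $\gamma$, and propagating the collapse through further pencils of $r$-hyperplanes through points of $\gamma$ should contradict surjectivity. With injectivity in hand, the above incidence sweeping either concludes directly, or can be finished by applying Theorem \ref{Th:AM} to the restriction of $f$ to a well-chosen auxiliary hyperplane containing $x_{0}$ that meets $P_{0}$ transversally. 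I expect the transversality claim used in the injectivity step to be the main technical obstacle, and would address it by passing to a concrete model of $\hn$ in which the pencil of $r$-hyperplanes through $\gamma$ can be explicitly parametrized.
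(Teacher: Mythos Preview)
Your endgame---upgrade ``into'' to ``onto'' and invoke Theorem~\ref{Th:lw}---is the paper's as well, but your route to the upgrade has two genuine gaps. First, the ``limit argument'': $f$ carries no continuity at all, so producing points of $f(P_0)$ arbitrarily close to $y_0$ tells you nothing about whether $y_0\in f(P_0)$; the image $f(P_0)$ need not be closed, and there is no mechanism for passing to the limit. Second, in your injectivity step you propose to ``arrange the image hyperplanes to be mutually transverse'', but you have no control over which $Q^{\ast}$ receives $f(Q)$---that is dictated by $f$. Surjectivity does force the family $\{Q^{\ast}\}$ (over all $r$-hyperplanes $Q\supseteq\gamma$) to cover $\hn$, but it does \emph{not} force $\bigcap Q^{\ast}$ to shrink to a point: all the $Q^{\ast}$ could share a common geodesic through $f(x_1)$ and still cover $\hn$. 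So no collapse of $\gamma$ follows. Your fallback of applying Theorem~\ref{Th:AM} to a restriction also fails, since Jeffers' theorem needs a bijection and you have no surjectivity for the restricted map.

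The paper avoids both obstacles by changing the dimension at which the ``onto'' step is carried out. Via Lemma~\ref{Th:lylem11} it first drops to $r=1$ (geodesics into geodesics); then Lemma~\ref{Th:thm2lem1} climbs back up to show $f$ sends every $(n-1)$-hyperplane into an $(n-1)$-hyperplane. Codimension one is decisive: if $a'\in S'\setminus f(S)$ has a preimage $a\notin S$, the geodesics from $a$ through points of the $(n-1)$-hyperplane $S$ sweep out a genuine open domain in $\hn$---something that fails for your $r<n-1$, where the corresponding cone lies in an $(r+1)$-hyperplane. A separate purely incidence-and-connectedness argument (Lemmas~\ref{Th:thm2lem2}--\ref{Th:thm2lem3}) shows $f$ cannot send any domain into an $(n-1)$-hyperplane, and this gives the contradiction without any limit process and without ever establishing injectivity. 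The idea missing from your plan is precisely this detour through codimension one.
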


The aim of  this paper is to prove  Conjecture \ref{Th:conj2} by
applying Theorem \ref{Th:lw} but leave Conjecture \ref{Th:conj1}
open. For completeness, we also give a simple proof of Theorem
\ref{Th:lyee} in Section \ref{S:simple}.

Other results in the line can be found in \cite{AM, BM,
CP,GW,Low,Ste}.

\begin{rem}Recently, the author \cite{Yao1} proved  that Conjecture 1
is true in the case $r=n-1$.\end{rem}

\begin{centering}\section{\!\!\!\!\!{. }      Some preparations}\label{S:reduction}
\end{centering}
This section is devoted to reduce the proof of Conjecture
\ref{Th:conj2} to that of the special case when $r=1$. That is, we
only need to prove that,
\begin{Thm}\label{Th:conjthm2}
Suppose that $f:\,\hn\to\hn$ is a surjective map  and  maps every
geodesic into a geodesic. Then $f$ is an isometric transformation.
\end{Thm}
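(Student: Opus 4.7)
The plan is to reduce Theorem \ref{Th:conjthm2} to Theorem \ref{Th:lw} applied with $r=1$. Theorem \ref{Th:lw} asserts that a geodesic-preserving map on $\hn$ (in the ``onto'' sense built into the definition) whose image is not contained in a single geodesic is an isometry. Surjectivity of $f$ at once yields non-degeneracy, since $f(\hn)=\hn$ cannot lie in any geodesic when $n\ge 2$; so the entire content is to strengthen the hypothesis ``$f$ maps each geodesic \emph{into} a geodesic'' to ``$f$ maps each geodesic \emph{onto} a geodesic.''

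To accomplish this, I would argue by contradiction. Assume $f(L)\subsetneq L^{*}$ for some geodesic $L$, where $L^{*}$ is the unique geodesic containing $f(L)$. Pick $p\in L^{*}\setminus f(L)$ and, using surjectivity, any preimage $q$ of $p$; then necessarily $q\notin L$. For each $a\in L$, the geodesic $M_{a}$ through $q$ and $a$ satisfies $f(M_{a})\subseteq N_{a}$ for some geodesic $N_{a}$ containing both $p=f(q)$ and $f(a)$. Since $p\neq f(a)$ (as $p\notin f(L)$) and both lie on $L^{*}$, two distinct points determine $N_{a}=L^{*}$. Hence $f(\Pi)\subseteq L^{*}$, where $\Pi$ is the unique $2$-hyperplane spanned by $L$ and $q$: a $2$-dimensional slice of $\hn$ has collapsed onto the geodesic $L^{*}$.

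The main obstacle is to bootstrap this partial collapse into a global contradiction. My plan is to iterate: given an intermediate totally-geodesic subspace $\Pi_{k}$ of dimension $\ge 2$ with $f(\Pi_{k})\subseteq L^{*}$ and $\Pi_{k}\ne\hn$, pick (if possible) a point $p'\in L^{*}\setminus f(\Pi_{k})$ and any preimage $q'$, which is necessarily outside $\Pi_{k}$; then, by exactly the same two-point argument as above, the hyperplane $\Pi_{k+1}$ spanned by $\Pi_{k}$ and $q'$ satisfies $f(\Pi_{k+1})\subseteq L^{*}$. After at most $n-1$ such steps one reaches $f(\hn)\subseteq L^{*}$, contradicting surjectivity.

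The delicate case is when $f(\Pi_{k})$ already equals $L^{*}$ while $\Pi_{k}\ne\hn$: surjectivity no longer automatically produces a preimage of an $L^{*}$-point lying outside $\Pi_{k}$, so the above iteration stalls. To dispose of this case I would examine, for a prospective point $y\notin\Pi_{k}$, the family of geodesics from $y$ to the various points of $\Pi_{k}$ together with their images, each of which is a geodesic through $f(y)$ meeting $L^{*}$; by combining the relations forced by the geodesic-into-geodesic hypothesis on collinear triples lying in $\Pi_{k}$ (whose images are constrained to $L^{*}$) and exploiting the totally-geodesic nature of $\Pi_{k}$, one should be able to conclude that $f(y)$ itself must lie in $L^{*}$. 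I expect this incidence/dimension argument, which is genuinely hyperbolic in character and unlike anything required in the Euclidean treatment of Chubarev and Pinelis, to be the principal technical hurdle. Once the upgrade from ``into'' to ``onto'' is secured, Theorem \ref{Th:lw} applies directly and concludes that $f$ is an isometric transformation.
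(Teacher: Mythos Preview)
Your plan diverges from the paper's, and it has a real hole. The paper does \emph{not} try to show that geodesics map onto geodesics. Instead it first proves that $f$ sends $r$-hyperplanes into $r$-hyperplanes for every $r$ (Lemma~\ref{Th:thm2lem1}), then establishes that no domain of $\hn$ can be mapped by $f$ into an $(n-1)$-hyperplane (Lemmas~\ref{Th:thm2lem2} and~\ref{Th:thm2lem3}), and finally uses this to show that every $(n-1)$-hyperplane is mapped \emph{onto} an $(n-1)$-hyperplane (Lemma~\ref{Th:thm2lem4}); Theorem~\ref{Th:lw} is then invoked with $r=n-1$, not $r=1$.

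The concrete gap in your argument is the sentence ``Hence $f(\Pi)\subseteq L^{*}$, where $\Pi$ is the unique $2$-hyperplane spanned by $L$ and $q$.'' In hyperbolic geometry the union $\bigcup_{a\in L}M_{a}$ of geodesics from $q$ through points of $L$ is a \emph{proper} open subset of $\Pi$: through $q$ there are infinitely many geodesics in $\Pi$ that never meet $L$ (the ultraparallels and limiting parallels), and points on those geodesics are missed. So you have only shown that an open piece of a $2$-flat, not the whole $2$-flat, maps into $L^{*}$. The same defect recurs at every step of your induction: the cone from $q'$ over $\Pi_{k}$ is never all of $\Pi_{k+1}$. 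Consequently neither your bootstrap nor your treatment of the ``delicate case'' (which you yourself flag as unresolved) gets off the ground as written.

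This is exactly why the paper works at the top dimension: the cone from a point over an $(n-1)$-hyperplane is a genuine \emph{domain} in $\hn$, and the substantial Lemmas~\ref{Th:thm2lem2}--\ref{Th:thm2lem3} (which use surjectivity together with the $(n-1)$-into-$(n-1)$ mapping from Lemma~\ref{Th:thm2lem1}) rule out any domain collapsing into an $(n-1)$-hyperplane. That machinery is what replaces the missing ``cone fills the span'' property you are implicitly using.
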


This reduction clearly depends on the following  lemma.

\begin{lem}\label{Th:lylem11}Suppose (i) there exists some $r$ such
that the map $f:\hn\to\hn$ maps every  \rdimen hyperplane into an
\rdimen hyperplane, (ii) $f(\hn)$ is not contained in an \rdimen
hyperplane. Then for any given $k$-di\-men\-sional hyperplane
$\Gamma\subseteq \hn$ ($1\leq k\leq r$), $f$ maps $\Gamma$ into a
$k$-di\-men\-sional hyperplane. In particular, $f$ maps a geodesic
into a geodesic.
\end{lem}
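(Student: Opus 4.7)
The plan is to carry out a downward induction on $k$, starting from the base case $k = r$ (which is hypothesis (i) itself) and descending to $k = 1$. The ``in particular'' clause about geodesics is exactly the $k = 1$ instance of the conclusion, so once the induction is established there is nothing further to check.

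For the inductive step, fix $k$ with $1 \leq k < r$ and assume that $f$ sends every $(k+1)$-hyperplane of $\hn$ into a $(k+1)$-hyperplane. Let $\Gamma \subseteq \hn$ be an arbitrary $k$-hyperplane, and consider the pencil $\{P_\alpha\}$ of all $(k+1)$-hyperplanes containing $\Gamma$. By the inductive hypothesis each $P_\alpha$ is mapped into some $(k+1)$-hyperplane $Q_\alpha$, hence $f(\Gamma) \subseteq \bigcap_\alpha Q_\alpha$. I then split into two cases. If some two members $Q_\alpha, Q_\beta$ differ, their intersection is a nonempty hyperbolic hyperplane of dimension at most $k$ (nonempty because it contains the nonempty set $f(\Gamma)$), and $f(\Gamma)$ lies in it, so it lies in a $k$-hyperplane and the step is complete. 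Otherwise all the $Q_\alpha$ coincide with a single $(k+1)$-hyperplane $Q$, and then $f\!\left(\bigcup_\alpha P_\alpha\right) \subseteq Q$. Since every point of $\hn \setminus \Gamma$ together with $\Gamma$ spans a unique $(k+1)$-hyperplane belonging to the pencil, this union is all of $\hn$; hence $f(\hn) \subseteq Q$. Because $k+1 \leq r$, the hyperplane $Q$ extends to some $r$-hyperplane containing $f(\hn)$, contradicting assumption (ii).

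There is no genuinely hard step in this argument; it is a clean two-case dichotomy driven by the pencil of $(k+1)$-hyperplanes through $\Gamma$. The only real points to verify are the two standard facts of hyperbolic geometry used implicitly above: that two distinct $(k+1)$-hyperplanes of $\hn$ meet in a hyperplane of dimension strictly less than $k+1$, and that a $k$-hyperplane together with any point not on it spans a unique $(k+1)$-hyperplane. Both follow from the totally-geodesic subspace structure of $\hn$, and the bound $k+1 \leq r < n$ guarantees that these $(k+1)$-hyperplanes are proper, so the pencil has many members and the induction is legitimate at every level.
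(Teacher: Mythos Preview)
Your argument is correct and follows essentially the same downward-induction strategy as the paper: exhibit the $k$-hyperplane as the intersection of two $(k+1)$-hyperplanes whose images lie in distinct $(k+1)$-hyperplanes, so that the image of the $k$-hyperplane is trapped in their $(\leq k)$-dimensional intersection. The only organizational difference is that the paper uses assumption (ii) constructively---picking a point $p$ with $f(p)\notin\Gamma'$ and forming the second hyperplane $\prod\{S,p\}$ directly---whereas you phrase the same step as a dichotomy over the whole pencil and invoke (ii) to exclude the degenerate case; the content is identical.
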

Throughout our discussion, lower case letters will denote points,
upper case letters sets of points, subscripts for like objects, and
primes for images under the map $f$. The notable exception to these
conventions will be when the image $f(\Lambda)$ of a set $\Lambda$
is not presumed to be $\Lambda'$ but  we will write
$f(\Lambda)\subseteq \Lambda'$.

For a nonempty subset $A$ with $\#A\geq2$ in $\hn$, let $\prod A$
denote the $t-$di\-men\-sional hyperplane containing $A$ such that
$t$ is the smallest positive integer. It is easy to see that $\prod
A$ and $t$ are uniquely determined by the set $A$.

 Now, we prove Lemma \ref{Th:lylem11}:

If $r=1$, it is a fortiori.  Let $r\geq 2$ and  $k=r-1$. Embedding
$S$ into some \rdimen hyperplane $\Gamma$, we have
$\Gamma'=f(\Gamma)$ as an $r$-di\-men\-sional hyperplane by
hypothesis. Since $f(\hn)$ is not contained in an \rdimen
hyperplane, we can find a point $p\in \hn\backslash \Gamma$ such
that $p'=f(p)\not\in \Gamma'$.

Letting $\Gamma_1=\prod\{S,p\}$, then $\Gamma_1$  is an \rdimen
hyperplane. Set $\Gamma'_1=f(\Gamma_1)$. Since
\[f(S)=f(\Gamma\cap\Gamma_1)\subseteq f(\Gamma)\cap
f(\Gamma_1)\] and $f(\Gamma)\cap f(\Gamma_1)=\Gamma'\cap \Gamma'_1$
is contained in  an $(r-1)$-di\-men\-sional hyperplane, the lemma
holds for $k=r-1$.  It is clear that $f(\hn)$ is also not contained
in an $(r-1)-$di\-men\-sional  hyperplane. Thus, we can inductively
backward prove that if this lemma holds for $k$ ($\geq2$), then it
does for $k-1$. This lemma then follows.

\begin{centering}\section{\!\!\!\!\!{. }     Proof of Theorem \ref{Th:conjthm2}}\label{S:conj2}
\end{centering}
In this section, we  prove Theorem \ref{Th:conjthm2}. Throughout
this section except in Lemma \ref{Th:thm2lem1}, we assume that $f$
satisfies the conditions of Theorem \ref{Th:conjthm2}. $l_{xy}$
always denotes the geodesic determined by $x$ and $y$ in $\hn$.

\begin{lem}\label{Th:thm2lem1}Suppose $f:\,\hn\to\hn$ maps a geodesic into a
geodesic. Then $f$ maps an \rdimen hyperplane into an \rdimen
hyperplane for  $1\leq r\leq n-1$.
\end{lem}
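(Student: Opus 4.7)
I plan to argue by induction on $r$. The base case $r=1$ is the hypothesis of the lemma, so the work is in the inductive step: assuming $f$ maps every $(r-1)$-hyperplane into an $(r-1)$-hyperplane, I wish to establish the same for $r$-hyperplanes. The strategy is to express a given $r$-hyperplane $\Gamma$ as a ``cone'' over a smaller hyperplane, reducing the $r$-dimensional statement to the $(r-1)$-dimensional one already in hand.

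Concretely, I would fix an $(r-1)$-hyperplane $\Pi\subset\Gamma$ together with a point $p\in\Gamma\setminus\Pi$, so that $\Gamma=\prod(\Pi\cup\{p\})$. By the inductive hypothesis $f(\Pi)$ lies in some $(r-1)$-hyperplane $\Pi'$. Let $\Gamma'$ be the smallest hyperplane containing $\Pi'\cup\{f(p)\}$, which has dimension at most $r$ (and can be enlarged to an $r$-hyperplane if the dimension is strictly smaller). The goal then becomes $f(\Gamma)\subseteq\Gamma'$. For an arbitrary $q\in\Gamma$ I would examine the geodesic $l_{pq}$. When $l_{pq}$ meets $\Pi$ at a point $s$, the points $p,q,s$ are collinear, so $f(p),f(q),f(s)$ lie on a common geodesic $m$; since $f(p),f(s)\in\Gamma'$ and a hyperplane is totally geodesic, $m\subseteq\Gamma'$ and hence $f(q)\in\Gamma'$. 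When $l_{pq}$ is ultraparallel to $\Pi$ inside $\Gamma$---a purely hyperbolic phenomenon with no Euclidean counterpart---I would pick a bridge point $t$ in the component of $\Gamma\setminus\Pi$ opposite to $p$ and $q$, so that both $l_{pt}$ and $l_{qt}$ are forced to cross $\Pi$. Two successive applications of the preceding argument then place $f(t)$ and subsequently $f(q)$ in $\Gamma'$.

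The step I expect to be the main obstacle is handling degenerate configurations created by possible non-injectivity of $f$: if $f(p)=f(s)$, or if an analogous collision occurs at the bridge stage, the ``common geodesic'' $m$ is no longer pinned down by two distinct images and the location of $f(q)$ is not directly determined. I would address this by exploiting the ample freedom available in the choice of $p$, of the bridge point $t$, and if necessary of $\Pi$ itself; for each fixed $q$ the set of bad configurations is small, so a generic choice avoids the pathology. A residual case in which every choice collapses would impose such a strong degeneracy on $f$ restricted to $\Gamma$ that $f(\Gamma)$ must already lie in a hyperplane of strictly smaller dimension, which is harmless for the conclusion since that hyperplane can in turn be embedded in an $r$-hyperplane.
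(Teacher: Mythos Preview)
Your inductive cone-over-a-subhyperplane strategy is a genuine alternative to the paper's argument, and the bridge-point device for the ultraparallel case is correct. The paper instead argues by contradiction: assuming $f$ sends $r$-hyperplanes into $r$-hyperplanes but some $(r+1)$-hyperplane $S$ has $\dim\prod f(S)\geq r+2$, it chooses $r+3$ image points $p'_1,\ldots,p'_{r+3}\in f(S)$ in general position, pulls them back to $p_1,\ldots,p_{r+3}\in S$, observes these form an $(r+1)$-simplex plus one extra point, and shows geometrically that some cevian $l_{p_ip_{r+3}}$ must cross the opposite face $\Lambda=\prod\{p_j:j\neq i\}$ at a point $q$. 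Then $f(q)\in f(\Lambda)\subseteq\prod\{p'_j:j\neq i\}$ and $f(l_{qp_i})\subseteq l_{q'p'_i}$ force $p'_{r+3}$ into $\prod\{p'_1,\ldots,p'_{r+2}\}$, contradicting general position.

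The substantive gap in your proposal is the degeneracy handling. You write that ``for each fixed $q$ the set of bad configurations is small, so a generic choice avoids the pathology,'' but the target hyperplane $\Gamma'=\prod(\Pi'\cup\{f(p)\})$ depends on the very choices of $\Pi$ and $p$ you propose to vary. Showing that \emph{for each} $q$ there is \emph{some} configuration placing $f(q)$ in \emph{some} $r$-hyperplane does not yield a single $\Gamma'$ containing all of $f(\Gamma)$. Your residual-case fallback has the same defect: a collapse for one particular $q$ does not force $f(\Gamma)$ itself to be low-dimensional. The paper's contradiction approach sidesteps all of this cleanly, because starting from image points $p'_1,\ldots,p'_{r+3}$ chosen in general position guarantees in advance that every pair of images used is distinct (e.g.\ $q'\in\prod\{p'_2,\ldots,p'_{r+2}\}$ while $p'_1$ is not), so the geodesic $l_{q'p'_1}$ is always well defined. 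If you want to keep your direct approach, one clean repair is to first choose $p$ with $f(p)\notin\Pi'$ (if no such $p$ exists you are already done), which kills the $f(p)=f(s)$ collision in the direct case; the bridge stage then still needs a separate argument, and the simplest route there is essentially the paper's general-position trick.
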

\begin{proof}
We use induction. Let $n\geq 3$ and assume that $f$ maps an \rdimen
hyperplane into an \rdimen hyperplane for some $r\in [1,n-2]$. We
need to show that
 $f$ maps an $(r+1)-$di\-men\-sional
hyperplane into an $(r+1)-$di\-men\-sional hyperplane.

Suppose not. Then there exists an $(r+1)-$di\-men\-sional hyperplane
$S$ such that $\prod f(S)$ has dimension   $d\geq r+2$. Therefore,
there exist $r+3$ points $\{p'_1,p'_2,\cdots,p'_{r+3}\}$  in $f(S)$
such that no $(r+1)-$di\-men\-sional hyperplane contains them  and
the
 hyperplane $\prod K'$ spanned by $K'=\{p'_1,p'_2,\cdots,p'_{r+3}\}$
  has dimension $r+2$. On the
other hand, there exist $r+3$ distinct points
$\{p_1,p_2,\cdots,p_{r+3}\}$ in $S$ such that $f(p_i)=p'_i$
($i=1,2,\cdots,r+3$).

It is clear that no  \rdimen hyperplane contains
 more than $r+2$ points of $\{p_1,p_2,\-\cdots,\-p_{r+3}\}$ by the  inductive assumption.
 Therefore, every $r+1$ points of
$\{p_1,p_2,\cdots,p_{r+2}\}$ can span a unique $r$-di\-men\-sional
hyperplane and these $r+2$ spanned hyperplanes divide the
$(r+1)-$di\-men\-sional hyperplane $S$ into $2^{r+2}-1$ disjoint
parts. The point $p_{r+3}$ is located inside some part.

Observe that $\{p_1,p_2,\cdots,p_{r+2}\}$ frames an $(r+1)$-simplex
in $S$. Anyway there exists at least a point of
$\{p_1,p_2,\cdots,p_{r+2}\}$, say $p_1$, such that the geodesic
 $l_{p_1p_{r+3}}$ crosses the \rdimen hyperplane
 $\Lambda=\prod\{p_2,p_3,\cdots,p_{r+2}\}$.  Letting $q=l_{p_1p_{r+3}}\cap \Lambda$, then
 $q\neq p_1$ and $q'=f(q)\in  \prod\{p'_2,p'_3,\cdots,p'_{r+2}\}$
 by the  inductive assumption. Thus,
 $p'_{r+3}=f(p_{r+3})\in f(l_{qp_1})$  which shows that $p'_{r+3}\in
 \prod\{p'_1,p'_2,\cdots,p'_{r+2}\}$ since $f(l_{qp_1})\subseteq l_{q'p'_1}$. This further indicates that
 $\{p'_1,p'_2,\cdots,p'_{r+3}\}$ is contained in the $(r+1)-$di\-men\-sional hyperplane $\prod
 \{p'_1,p'_2,\cdots,p'_{r+2}\}$, a contradiction. The inductive
 proof  is completed.

\end{proof}

\begin{rem}Lemma \ref{Th:thm2lem1} can be regarded as a converse of
Lemma \ref{Th:lylem11}. We have an essential difficulty in obtaining
its spherical version which is also the only bug to solve Conjecture
1 while we prove Conjecture 1 when $r=n-1$ in \cite{Yao1}. In other
words, the answer to the following problem is crucial to the
solution of  Conjecture 1.\end{rem}

\begin{open} Suppose that $f:\,\rn\to\rn$
($n\geq3$) is a surjective map  and   maps a circle into a circle.
Can we say that $f$ maps an $(n-1)-$dimensional sphere into an
$(n-1)-$dimensional sphere?
\end{open}

\begin{lem}\label{Th:thm2lem2}
Suppose $D$ is a domain in $\hn$. If  $f(D)$ is  contained in an
$(n-1)-$di\-men\-sional hyperplane, then $f$ is constant on $D$.
\end{lem}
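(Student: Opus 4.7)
The plan is to argue by contradiction. Suppose that some $y, z \in D$ have $f(y) \neq f(z)$, and aim to contradict the combination of $f(D) \subseteq H$ with the surjectivity of $f$. The underlying tool is the following ``oblique geodesic'' observation. Since $H$ is a proper $(n-1)$-hyperplane and $f$ is surjective, there exists $p \in \hn$ with $f(p) \notin H$. Then $f(l_{yp})$ lies in the geodesic through $f(y) \in H$ and $f(p) \notin H$, which is a geodesic not contained in $H$ and therefore meets $H$ at most at the point $f(y)$. Combined with $f(D) \subseteq H$, this forces $f \equiv f(y)$ on the nonempty open arc $l_{yp} \cap D$; analogously $f \equiv f(z)$ on $l_{zp} \cap D$.

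Next I would pass to the $2$-plane $P := \prod\{y, z, p\}$. By Lemma \ref{Th:thm2lem1}, $f(P)$ is contained in a $2$-plane $P'$, and since $f(p) \in P' \setminus H$, $P'$ is not contained in $H$, so $\lambda' := P' \cap H$ is a single geodesic, necessarily containing both $f(y)$ and $f(z)$. Hence $f(P \cap D) \subseteq \lambda'$. The crux is then to produce a second point $p_2 \in P$, $p_2 \neq p$, with $f(p_2) \notin H$, chosen close to the extended geodesic $l_{yz}$ inside $P$, so that the oblique geodesic $l_{yp_2}$ meets $l_{zp}$ at some point $q$ in the open set $D$. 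For such $q$ one has $f(q) = f(y)$ (from the constancy on $l_{yp_2} \cap D$) and $f(q) = f(z)$ (from the constancy on $l_{zp} \cap D$), contradicting $f(y) \neq f(z)$.

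The principal obstacle is to secure the existence of such a $p_2 \in P$ and to position it so that the intersection $q$ falls inside the open set $D$. If no such $p_2$ existed in $P$, then $f(P \setminus \{p\}) \subseteq H$, so the connected open $2$-dimensional set $P \setminus \{p\}$ would itself be a domain on which $f$ takes values in $H$, reproducing the hypothesis of the lemma in a lower-dimensional ambient setting. I would therefore proceed by induction on the ambient dimension $n$: first establishing the case $n = 2$ directly---where $P$ is all of $\mathbb{H}^2$, any two generic geodesics intersect, and surjectivity readily supplies the required $p_2$ near $l_{yz}$---and then reducing $n \geq 3$ to the base case by restricting $f$ to $P$, using that $f|_P \colon P \to P'$ inherits the geodesic-preserving property and that the $r$-plane preservation of Lemma \ref{Th:thm2lem1} combined with the global surjectivity of $f$ provides enough structure on $f|_P$ to invoke the two-dimensional result.
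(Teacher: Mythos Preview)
Your approach is genuinely different from the paper's, but it has a real gap in the inductive step. The paper argues globally: it sets $S=f^{-1}(\Gamma')$, proves $S$ is path-connected and that $\hn\setminus S$ has empty interior, and then (using Lemma~\ref{Th:thm2lem1} and surjectivity) produces an $(n-1)$-hyperplane $\Lambda$ with $f(\Lambda)\subseteq\Lambda'\subseteq\hn\setminus\Gamma'$; since $\Lambda$ separates $\hn$, it must meet the connected, ``co-interior-free'' set $S$, contradicting $\Lambda'\cap\Gamma'=\emptyset$. No induction on $n$ and no restriction to sub-planes is needed.

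Your reduction to a $2$-plane $P$ fails at the step where you invoke the $2$-dimensional case for $f|_P:P\to P'$. The lemma you are proving is stated for a \emph{surjective} map $f:\hn\to\hn$, and this hypothesis is actually used (it is what forces the image not to collapse). When you restrict to $P$ you lose surjectivity: there is no reason for $f|_P$ to be onto $P'$, and the vague appeal to ``global surjectivity of $f$ provides enough structure on $f|_P$'' does not supply a substitute. Indeed, degenerate (non-surjective) geodesic-into-geodesic maps on $\mathbb{H}^2$ exist, so one cannot expect the $n=2$ statement to hold for $f|_P$ without an additional argument.

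There is also a logical slip in your dichotomy. You need a point $p_2\in P$ with $f(p_2)\notin H$ \emph{and} positioned so that $l_{yp_2}\cap l_{zp}$ lands inside $D$. Failure of this combined requirement does not give $f(P\setminus\{p\})\subseteq H$; there could be many $p_2\in P$ with $f(p_2)\notin H$, none of them positioned as you need. Even in your base case $n=2$ the assertion ``surjectivity readily supplies the required $p_2$ near $l_{yz}$'' is exactly the nontrivial point: points near $l_{yz}$ that lie in $D$ have image in $H$, so a suitable $p_2$ would have to be found outside $D$ yet close enough for the intersection $q$ to fall back inside $D$, and you give no mechanism for this. The paper's Claim~2 (that $\hn\setminus S$ has no interior points) is essentially what makes such ``nearby exterior'' points available, but proving it already requires a careful argument of the type you have not supplied.
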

\begin{proof}
Suppose not. Then $f(D)$ is  contained in an $(n-1)-$di\-men\-sional
hyperplane, say $\Gamma'\subseteq \hn$, and $f(D)$ contains at least
two points. Let $S=\{w\in \hn:\,f(w)\in \Gamma'\}$.  Obviously,
$D\subseteq S$, $f(S)\subseteq \Gamma'$ and $S\neq \hn$.

\textbf{Claim 1.} $S$ is path-connected.

We may choose two points $p,\,q$ in $S$ such that $f(p)\neq f(q)$
since $f(S)=\Gamma'$. Now, for any other point $w\in S$, it is no
harm to assume that $f(w)\neq f(q)$. Thus, the geodesic
$l_{wq}\subseteq S$ since $f(l_{wq})\subseteq \Gamma'$ which implies
that $S$ is path-connected.

\textbf{Claim 2.} $\hn-S$ contains no interior points.

Suppose to the contrary.  Let $p$ be an interior point of $\hn-S$
and $P$ be the largest connected open set in $\hn\backslash S$ such
that $p\in P$. Whence, every set $f(l_{pq}\cap D)$ is a singleton
since otherwise $p'\in \Gamma'$, where the geodesic $l_{pq}$ passes
through $p$ and a point $q\in D$.

Since $f(D)$ contains at least two points, there are  two points,
say $u$ and $v$, such that the geodesics $l_{p'u'}$ and $l_{p'v'}$
are distinct. Therefore, there exists at least a point $x$ in $D$
and a sequence of points $\{x_n\}_{n=1}^{\infty}$ in $D$ such that
\[
\lim_{n\to \infty}x_n=x   \text{  and  } l_{p'x'_n}\neq l_{p'x'},
\;\forall\;n.
\]

Observe that  $f(l_{px_n}\cap D)=x'_n$ and  $f(l_{px}\cap D)=x'$ and
 $x'_n\neq x'$ for all $n$. We then may choose sufficiently
large integer $m$ and a point $y_m$ in
 $l_{px_m}\cap D$  such that the
geodesic $l_{xy_m}$ through $x$ and $y_m$ crosses the domain $P$.
Recalling  that $f(x)=x'$ and $f(y_m)=x'_m$, we can find a point
$z\in l_{xy_m}\cap P$ such that $f(z)\in f(l_{xy_m})\subseteq
\Gamma'$. This indicates that $z\in S$, a contradiction.

We continue to derive a new contradiction from the above two
\textbf{Claim}s
 as follows.

 Let $\Lambda'\subseteq \hn\backslash\Gamma'$ be an $(n-1)-$di\-men\-sional
 hyperplane. Choose $n$ points $\{p'_1,p'_2,\cdots,p'_n\}$ in $\Lambda'$ such that these $n$ points
 are not contained in an $(n-2)-$ di\-men\-sional hyperplane (when $n=2$, such choice is trivial).
There exist $n$ distinct points $\{p_1,p_2,\cdots,p_n\}$ in $\hn$
such that $f(p_i)=p'_i$ ($i=1,2,\cdots,n$). Let
$\Lambda=\prod\{p_1,p_2,\cdots,p_n\}$ be the hyperplane spanned by
$\{p_1,p_2,\cdots,p_n\}$. It is easy to deduce  from Lemma
\ref{Th:thm2lem1} that the dimension $dim(\Lambda)$ of  $\Lambda$ is
just $n-1$ and $f(\Lambda)\subseteq\Lambda'$. Notice that $\Lambda$
divides $\hn$ into two disjoint domains. Necessarily, $\Lambda\cap
S\neq \emptyset$ by \textbf{Claim}s 1 and 2. Thus, we have
$f(\Lambda\cap S)\neq \emptyset$ which contradicts that
$f(\Lambda)\cap f(S)\subseteq \Lambda'\cap\Gamma'=\emptyset$. This
completes the proof of Lemma \ref{Th:thm2lem2}.
\end{proof}

\begin{lem}\label{Th:thm2lem3}
Suppose $D$ is a domain in $\hn$. Then $f(D)$ cannot be contained in
an $(n-1)-$di\-men\-sional hyperplane  in $\hn$.
\end{lem}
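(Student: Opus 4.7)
The plan is to argue by contradiction: suppose $f(D)\subseteq\Gamma'$ for some $(n-1)$-hyperplane $\Gamma'$. Lemma \ref{Th:thm2lem2} then forces $f|_D\equiv c$ for some $c\in\Gamma'$. Set $E=f^{-1}(c)\supseteq D$; by surjectivity $\hn\setminus E\neq\emptyset$. The main tool is the cone construction from the proof of Lemma \ref{Th:thm2lem2}: for any $y\in\hn\setminus E$ and any open ball $B\subseteq D$, the image of the double cone $C(y,B)=\bigcup_{q\in B}l_{yq}$ lies on the geodesic $l_{f(y),c}$, hence inside an $(n-1)$-hyperplane. Each of the two connected components of $C(y,B)\setminus\{y\}$ is a domain, so Lemma \ref{Th:thm2lem2} applied to each makes $f$ constant on each; on the component containing $B$ the constant equals $c$.

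Exactly as in the ``no interior'' argument inside the proof of Lemma \ref{Th:thm2lem2}, I would next show $\hn\setminus E$ has empty interior: if a non-empty open $V\subseteq\hn\setminus E$ existed and $p\in V$, any geodesic from $p$ into $B\subseteq D$ would have an initial segment inside $V$ that also sits in the $B$-component of $C(p,B)\setminus\{p\}$, forcing $f\equiv c$ on a piece of $V$, which is impossible. An immediate consequence is that the constant on the opposite component of $C(y,B)\setminus\{y\}$ must also equal $c$ (otherwise that open nappe would lie in $\hn\setminus E$), so $f\equiv c$ on all of $C(y,B)\setminus\{y\}$ for every $y\notin E$ and every $B\subseteq D$.

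To reach the final contradiction I mimic the closing step of the proof of Lemma \ref{Th:thm2lem2}. Choose two disjoint $(n-1)$-hyperplanes $\Gamma'_1,\Gamma'_2\subseteq\hn$ with $c\notin\Gamma'_1\cup\Gamma'_2$. Surjectivity lets me lift $n$ points in general position inside each $\Gamma'_i$ to preimages, and Lemma \ref{Th:thm2lem1} guarantees that their span is an $(n-1)$-hyperplane $\Gamma_i\subseteq\hn$ with $f(\Gamma_i)\subseteq\Gamma'_i$; disjointness of the $\Gamma'_i$ forces $\Gamma_1\cap\Gamma_2=\emptyset$. Let $W$ be an open component of $\hn\setminus(\Gamma_1\cup\Gamma_2)$ bordered by both hyperplanes. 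For any $x\in W$ and any geodesic $l_{q_1q_2}$ through $x$ with $q_i\in\Gamma_i$, the images $f(q_1)\in\Gamma'_1$ and $f(q_2)\in\Gamma'_2$ are distinct, so $f(l_{q_1q_2})\subseteq l_{f(q_1),f(q_2)}$; for a generic such pair $(q_1,q_2)$ the image geodesic avoids $c$, giving $f(x)\neq c$ and hence $x\in\hn\setminus E$. The aim is that such good $x$'s fill a non-empty open subset of $W$, contradicting the empty-interior conclusion above. The main obstacle is making this genericity rigorous without continuity of $f$: the ``bad'' pairs through a fixed $x$ are parameterized by the fiber $f^{-1}(\phi(f(q_1)))\cap\Gamma_2$ of the central projection $\phi\colon\Gamma'_1\to\Gamma'_2$ through $c$, and one must show these fibers cannot be so large as to obstruct the construction of an open set of good $x$'s.
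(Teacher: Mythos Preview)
Your two ingredients---the cone argument and the two-hyperplane argument---are exactly the ones the paper uses; you have simply swapped their order. The paper first runs the hyperplane construction to produce an open set on which $f$ misses $c$ (its ``Claim''), and then applies the cone argument from a point of that set to enlarge the maximal constant domain, which is the contradiction. You instead run the cone argument first to show $\hn\setminus E$ has empty interior, and then try to run the hyperplane construction to produce interior. Either order is fine.

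The genuine gap is in your final step, and it is easy to repair. Requiring only $c\notin\Gamma'_1\cup\Gamma'_2$ is too weak: a geodesic $l_{f(q_1)f(q_2)}$ with $f(q_i)\in\Gamma'_i$ may very well pass through $c$, and without any regularity of $f$ you have no way to control how often this happens---this is precisely the ``genericity'' obstacle you flag. The fix (and this is what the paper does) is to impose the stronger condition that $c$ lies outside the set
\[
K'=\{z'\in\hn:\ \exists\,x'\in\Gamma'_1,\ y'\in\Gamma'_2\ \text{with}\ z'\in l_{x'y'}\},
\]
i.e.\ that no geodesic through $c$ meets both $\Gamma'_1$ and $\Gamma'_2$. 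Such a pair of disjoint $(n-1)$-hyperplanes certainly exists (choose them so that, viewed from $c$, the cones of directions hitting $\Gamma'_1$ and $\Gamma'_2$ are disjoint). With this choice \emph{every} geodesic from $\Gamma'_1$ to $\Gamma'_2$ misses $c$, so for the lifted hyperplanes $\Gamma_1,\Gamma_2$ the entire set $K=\{z\in\hn:\ \exists\,x\in\Gamma_1,\ y\in\Gamma_2,\ z\in l_{xy}\}$ satisfies $f(K)\subseteq K'$, hence $K\subseteq\hn\setminus E$. Since $K$ has nonempty interior, this contradicts your empty-interior step, and the proof is complete---no genericity, no fiber analysis, and no need for your step about the second nappe.
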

\begin{proof}
Suppose not. Then by Lemma \ref{Th:thm2lem2}, $f$ is constant on
$D$, in other words,  $f$ maps $D$ to a point, say $p'\in\hn$.   Let
$\mathscr{D}$ be the largest connected open set of $\hn$ such that
$f(\mathscr{D})=\{p'\}$. Set $S=\{w\in \hn:\,f(w)=p'\}$.  Obviously,
$D\subseteq\mathscr{D}\subseteq S$ and  $S\neq \hn$.

\textbf{Claim.} $\hn-S$ contains interior points.

We may choose two $(n-1)-$di\-men\-sional hyperplanes $\Phi'$ and
$\Psi'$ in $\hn\backslash\{p'\}$ such that, (i) $\Phi'\cap
\Psi'=\emptyset$ and (ii) the convex set $K'=\{z'\in\hn:
\exists\;x'\in\Phi',\,y'\in \Psi',\, s.t.\;z'\in l_{x'y'}\}$ does
not contain $p'$. By virtue of Lemma \ref{Th:thm2lem1}, one can find
two hyperplanes $\Phi$ and $\Psi$ in $\hn$ such that
$f(\Phi)\subseteq \Phi'$, $f(\Psi)\subseteq \Psi'$ and
$dim(\Phi)=dim(\Psi)=n-1$. It is evident that
$\Phi\cap\Psi=\emptyset$. It is also clear that the convex set
$K=\{z\in\hn: \exists\;x\in\Phi,\,y\in \Psi,\, s.t.\;z\in l_{xy}\}$
contains interior points and $f(K)\subseteq K'$. This claim follows
immediately.

 Now, in virtue of the above \textbf{Claim}, it is easy to choose an interior
point  $q$  of $\hn\backslash S$ and a point $b$ on the boundary of
 $\mathscr{D}$
such that  geodesics through $p$ and points of $\mathscr{D}$ contain
a neighborhood $N$ of $b$. Noticing that all such geodesics are
mapped into $l_{q'p'}$, we have $N\subseteq S$ by Lemma
\ref{Th:thm2lem2}. This contradiction establishes this lemma.
\end{proof}

\begin{lem}\label{Th:thm2lem4}
$f$ maps every $(n-1)-$di\-men\-sional hyperplane in $\hn$ onto an
$(n-1)-$di\-men\-sional hyperplane  in $\hn$, i.e., $f$ is
$(n-1)-$hyperplane preserving.\end{lem}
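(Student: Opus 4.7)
The plan is to combine Lemma \ref{Th:thm2lem1} (which already gives the ``into'' conclusion) with Lemma \ref{Th:thm2lem3} (which forbids $f$ from collapsing a domain into an $(n-1)$-hyperplane) in order to upgrade ``into'' to ``onto''. Lemma \ref{Th:thm2lem1} with $r=n-1$ yields, for every $(n-1)$-hyperplane $\Lambda \subseteq \hn$, an $(n-1)$-hyperplane $\Lambda'$ with $f(\Lambda) \subseteq \Lambda'$; what remains is to establish the reverse inclusion $\Lambda' \subseteq f(\Lambda)$.

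I would argue by contradiction: assume there exists $q' \in \Lambda' \setminus f(\Lambda)$, and use the surjectivity of $f$ to pick $q \in \hn$ with $f(q) = q'$; necessarily $q \notin \Lambda$. The central observation is that for every $r \in \Lambda$ the image $f(l_{qr}) \subseteq l_{q'f(r)}$ lies entirely inside $\Lambda'$, because both anchor points $q'$ and $f(r) \in \Lambda'$ and $\Lambda'$ is totally geodesic in $\hn$. Consequently the whole double cone $C := \bigcup_{r \in \Lambda} l_{qr}$ sits inside $f^{-1}(\Lambda')$.

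The crux is to show that $C$ contains a domain of $\hn$. Fix a generic $r_0 \in \Lambda$ at which $l_{qr_0}$ meets $\Lambda$ transversally (such $r_0$ are dense since $q \notin \Lambda$), and let $m$ be an interior point of the segment $[q, r_0]$. Parametrize a neighborhood of $m$ in $C$ by $\phi(r,t)$, the point at normalized parameter $t \in (0,1)$ along $l_{qr}$. At $(r_0, t_0)$ with $\phi(r_0, t_0) = m$, the $\partial_t$-direction is the tangent to $l_{qr_0}$ at $m$, while the $\partial_r$-directions are Jacobi fields along $l_{qr_0}$ that vanish at $q$ and fill out $T_{r_0}\Lambda$ at $r_0$; at the interior point $m$ these $(n-1)$ Jacobi values span a subspace transverse to the geodesic tangent (no conjugate points occur in negatively curved $\hn$, and the transversality at $r_0$ is preserved along the geodesic). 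Hence $d\phi|_{(r_0, t_0)}$ has full rank $n$, $\phi$ is a local diffeomorphism there, and $C$ contains an open neighborhood $D$ of $m$. Then $f(D) \subseteq \Lambda'$ contradicts Lemma \ref{Th:thm2lem3}, and the proof is complete.

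The main obstacle is the transversality/rank computation in the third paragraph: while it is geometrically obvious that sweeping geodesics from $q$ over the $(n-1)$-dimensional hyperplane $\Lambda$ covers an open region of $\hn$, turning this intuition into a clean hyperbolic-geometric argument (either via the Jacobi-field picture above or via a local coordinate computation in the Poincar\'e or upper half-space model) is the technical heart of the proof.
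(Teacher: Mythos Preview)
Your proposal is correct and follows exactly the paper's argument: assume $q'\in\Lambda'\setminus f(\Lambda)$, pull back to $q\notin\Lambda$, observe that every geodesic $l_{qr}$ with $r\in\Lambda$ is mapped into $\Lambda'$, and invoke Lemma~\ref{Th:thm2lem3} on the open region swept out by these geodesics. The only difference is emphasis: the paper dispatches the fact that $\bigcup_{r\in\Lambda}l_{qr}$ covers a domain in a single clause (it is immediate, e.g.\ in the Klein model where $\Lambda$ is a Euclidean hyperplane and the geodesics are straight segments), whereas you treat this as the technical heart and develop a Jacobi-field argument that, while valid, is considerably more machinery than needed.
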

\begin{proof}
Given an $(n-1)-$di\-men\-sional hyperplane  $S$ in $\hn$, by Lemma
\ref{Th:thm2lem1}, there exists an $(n-1)-$di\-men\-sional
hyperplane $S'\supset f(S)$. We now show that $S'=f(S)$. Suppose
not, then there should exist some  point $a'\in S'\backslash f(S)$.
Let $a\in \hn\backslash S$ be an inverse image of $a'$ under $f$.
The collection of geodesics through $a$ and points of $S$ covers a
domain in $\hn$ which is mapped into the $(n-1)-$di\-men\-sional
hyperplane $S'$. It derives a desired contradiction from Lemma
\ref{Th:thm2lem3}. Thus, we prove that $f$ is $(n-1)-$hyperplane
preserving.
\end{proof}

Finally,  the  proof of Theorem \ref{Th:conjthm2} is concluded by
Lemma \ref{Th:thm2lem4} and Theorem \ref{Th:lw} (let $r=n-1$).

\begin{centering}\section{\!\!\!\!\!{. }
A simple proof of Theorem \ref{Th:lyee}}\label{S:simple}
\end{centering}
By the foregoing reasoning,  the proof of Theorem \ref{Th:lyee}
reduces to that of the following theorem.
\begin{Thm}\label{Th:thm3}
Suppose that $f:\,\rnn\to\rnn$ is a surjective map  and  maps every
line  into a line. Then $f$ is an affine transformation.
\end{Thm}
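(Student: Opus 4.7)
The plan is to mirror, almost line for line, the proof of Theorem \ref{Th:conjthm2} given in Section \ref{S:conj2}, replacing geodesics of $\hn$ by straight lines of $\rnn$ and $k$-dimensional hyperbolic hyperplanes by $k$-dimensional affine subspaces. The end game is to promote the line-preserving hypothesis to an $(n-1)$-hyperplane-preserving statement and then invoke the Euclidean version of Theorem \ref{Th:lw} with $r=n-1$, using surjectivity to exclude the degenerate case.

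First I would establish the Euclidean analogue of Lemma \ref{Th:thm2lem1}: if $f:\rnn\to\rnn$ sends every line into a line, then for every $1\le r\le n-1$ it sends every $r$-dimensional affine subspace into an $r$-dimensional affine subspace. The induction from the hyperbolic argument transfers verbatim, since the only geometric input is that, given an $(r+1)$-simplex $\{p_1,\dots,p_{r+2}\}$ containing an extra point $p_{r+3}$, some line $l_{p_i p_{r+3}}$ must meet the opposite $r$-face $\prod\{p_1,\dots,\widehat{p_i},\dots,p_{r+2}\}$; this is an elementary convexity fact in Euclidean space.

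Next I would prove the analogues of Lemmas \ref{Th:thm2lem2} and \ref{Th:thm2lem3}. For the first, assume $f(D)$ lies in an affine hyperplane $\Gamma'$ and pass to $S=f^{-1}(\Gamma')$. Path-connectedness of $S$ follows as before by taking $l_{wq}$ with $f(w)\ne f(q)$; interior-freeness of $\rnn\setminus S$ is established by the same limiting argument on lines $l_{p x_n}\cap D$ together with a transverse line $l_{x y_m}$ which is forced to cross the putative open set in $\rnn\setminus S$. The closing separation step uses that an $(n-1)$-dimensional affine hyperplane $\Lambda$ divides $\rnn$ into two open half-spaces, so $S\cap\Lambda\ne\emptyset$ and one obtains the same contradiction. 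The analogue of Lemma \ref{Th:thm2lem3} is cleaner in the Euclidean setting: two disjoint parallel affine hyperplanes $\Phi'$, $\Psi'$ whose convex hull avoids a prescribed point $p'$ are trivial to choose, and the construction of a domain mapped into a single point on which one then finds geodesic fans through a boundary point $b$ runs exactly as in the hyperbolic case.

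Lemma \ref{Th:thm2lem4}'s Euclidean counterpart then follows word for word: if some $(n-1)$-hyperplane $S$ satisfies $f(S)\subsetneq S'$, an inverse image $a$ of a point of $S'\setminus f(S)$ produces a cone of lines through $a$ meeting $S$, whose union is $\rnn$ minus the parallel hyperplane through $a$ and so contains a domain; this domain is mapped into $S'$, contradicting the Euclidean Lemma \ref{Th:thm2lem3}. With $f$ now $(n-1)$-hyperplane preserving and surjective, hence non-degenerate, Theorem \ref{Th:lw} forces $f$ to be affine. The main obstacle is merely bookkeeping — verifying that each auxiliary configuration (simplices, separating hyperplanes, line fans through an external point) can be realised with the required openness and disjointness properties in $\rnn$; since Euclidean geometry admits parallels and simple half-space separation, these checks are if anything easier than in $\hn$, and no genuinely new idea is required.
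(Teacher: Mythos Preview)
Your plan is correct and would certainly produce a valid proof, but the paper takes a considerably shorter route that you miss by transporting the hyperbolic argument wholesale. The paper keeps only the analogue of Lemma~\ref{Th:thm2lem1} (so that $f$ sends every $(n-1)$-hyperplane \emph{into} an $(n-1)$-hyperplane) and then bypasses the analogues of Lemmas~\ref{Th:thm2lem2} and~\ref{Th:thm2lem3} entirely. The key observation is specific to Euclidean geometry: given an $(n-1)$-hyperplane $\Gamma$ with $f(\Gamma)\subsetneq\Gamma'$ and a preimage $p$ of some $p'\in\Gamma'\setminus f(\Gamma)$, every line through $p$ either meets $\Gamma$ (and is then mapped into $\Gamma'$, since $p'\in\Gamma'$), or is parallel to $\Gamma$ and hence lies in the \emph{single} parallel hyperplane through $p$, whose image sits in some $(n-1)$-hyperplane $\Lambda'$. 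Thus $f(\rnn)\subseteq\Gamma'\cup\Lambda'$, contradicting surjectivity directly. What your approach buys is uniformity with the hyperbolic case and no reliance on the parallel postulate; what the paper's approach buys is a one-paragraph proof instead of four lemmas. The reason the shortcut fails in $\hn$, forcing the longer argument there, is precisely that the geodesics through $p$ missing $\Gamma$ do not lie in any single hyperplane.
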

\begin{proof}

For one thing, it is easy to prove that $f$ maps an
$(n-1)$-di\-men\-sional hyperplane into an $(n-1)-$di\-men\-sional
hyperplane as proving Lemma \ref{Th:thm2lem1}.

We claim that $f$ also maps an $(n-1)$-di\-men\-sional hyperplane
onto an $(n-1)-$di\-men\-sional hyperplane. Suppose to the contrary.
Then there exists an $(n-1)-$di\-men\-sional hyperplane $\Gamma$ in
$\rnn$ such that $f(\Gamma)$ is contained in an
$(n-1)-$di\-men\-sional hyperplane $\Gamma'$ and $\Gamma'\backslash
f(\Gamma)\neq \emptyset.$

Let $p'\in \Gamma'\backslash f(\Gamma)$ and $p$ an inverse image of
$p'$. Observe that lines through $p$ and points in
$\rnn\backslash\{p\}$ either cross $\Gamma$ or parallel $\Gamma$,
and the formers are mapped into $\Gamma'$ and the latters are
contained in an $(n-1)$-di\-men\-sional hyperplane parallel to
$\Gamma$ and hence are mapped into an $(n-1)$-di\-men\-sional
hyperplane, say $\Lambda'$. Thus, $\rnn$ is mapped into the union of
$\Gamma'$ and $\Lambda'$, a contradiction.

The proof of Theorem \ref{Th:thm3} is completed by applying Theorem
\ref{Th:lw}.
\end{proof}

\begin{centering}\section{\!\!\!\!\!{. }
Concluding remarks}\label{S:simple}
\end{centering}

All  results mentioned in this paper belong to a young and active
geometrical discipline called ``characterizations of geometrical
mappings under mild hypotheses". The discipline started around 1950
with fundamental theorems  of A. D. Alexandrov on spacetime
transformations and causal automorphisms (see \cite{Bz}). Throughout
the conditions in these theorems, for examples, Theorems $A
\thicksim D$ and our main result, surjectivity, injectivity and
non-degenerate play inevitable  roles. In our result, we remove the
injectivity assumption but non-degenerate one is satisfied
automatically.  In a future paper \cite{Yao2}, the author even
further replaces the surjectivity assumption with the condition that
 every $r$-di\-men\-sional hyperplane  contains
at least $r+1$ image points. One may ask, what situation will be if
the surjectivity assumption on $f$ replaced by the injectivity one?

Actually, we can say nothing on $f$ because there exists a so-called
degenerate map $f:\,\hn\to\hn$ such that $f$ maps $\hn$ one-to-one
into some $\tilde{r}$-di\-men\-sional hyperplane. So, we need
another restriction on $f$ to guarantee that it is an automorphism.
Naturally, ``non-degenerate" is the first candidate. Maybe one
expects a theorem as follows:

\textit{If $f:\,\hn\to\hn$ is an injective map and maps an
$r$-di\-men\-sional hyperplane into an $r$-di\-men\-sional
hyperplane and if $f$ is non-degenerate, i.e., $f(\hn)$ cannot be
contained in an $r$-di\-men\-sional hyperplane, then $f$ is an
isometry.}

Unfortunately, recently in oral communication,
 a counterexample was  given by Li Baokui. We interpret it here.

\textit{Counterexample:} For convenience, let $n=2$ and  use the
semisphere $S$ in $\mathbb{R}^3$ as the model of $\mathbb{H}^2$,
namely, \begin{equation*}
 S=\{(x,y,z)\in\mathbb{R}^3:\,x^2+y^2+z^2=1,\,z>0\}.
 \end{equation*}
Then, all geodesics in $S$ are these semicircles perpendicular to
the $XOY$-plane. Project $S$ onto the unit disk $D$  in
$\mathbb{R}^2$:
\begin{equation*}
D=\{(x,y)\in\mathbb{R}^2:\,x^2+y^2<1\}.
 \end{equation*}
Say, let $P$ denote the projection:
\begin{equation*}
P(x,y,z)=(x,y).
 \end{equation*}
 It is easy to see that the
images of geodesics in $S$ under $P$ are these segments with ends at
the boundary of $D$. Let  $A$ be an affine transformation in
$\mathbb{R}^2$ with the form:
\begin{equation*}
A(x,y)=(ax,by),\;a,\,b\in (0,1).
 \end{equation*}
 Thus, the composition map $f=P^{-1}\circ
A\circ P$ maps $S$ into $S$.  It is evident that $f$ is injective
and non-degenerate. It maps a geodesic in $S$ into a geodesic and
the action of  $f$ on geodesics is shortening them and hence $f$ is
not an isometry.

Although   the exceptional phenomenon  occurs in $\hn$, we cannot
find such counterexample in $\rn$ or $\rnn$ so far. Whence, we end
 remarks with two open problems.

\begin{open} Suppose $f:\,\rn\to\rn$ is an injective map and  maps an
$r$-di\-men\-sional sphere into an $r$-di\-men\-sional sphere.  Can
we say that  $f$ is a M\"obius transformation if $f(\rn)$  cannot be
contained in an $r$-di\-men\-sional sphere?
\end{open}

\begin{open} Suppose $f:\,\rnn\to\rnn$ is an injective map and  maps an
$r$-di\-men\-sional hyperplane into an $r$-di\-men\-sional
hyperplane. Can we say that  $f$ is an affine transformation if
$f(\rnn)$  cannot be contained in an $r$-di\-men\-sional hyperplane?
\end{open}

\noindent\textbf{Acknowledgements. }The author would like to express
gratitude to the referee for his valuable comments. Thanks are also
due to Professor Xiantao Wang  for some idea of this paper inspired
by helpful communication with him.

\renewcommand\refname{\centerline{\Large{R}\normalsize{EFERENCES}}}


\begin{thebibliography}{99}

\bibitem{AM}J. Acz\'{e}l and M. A. McKiernan, \emph{On the
characterization of hyperplane projective and complex
M\"obius-transformation,} Math. Nachr. 33 (1967), 315-337.

\bibitem{BM}A. F. Beardon and D. Minda, \emph{Sphere-preserving
maps in inversive geometry, }Proc. Amer. Math. Soc. 130 (2001),
987-998.

\bibitem{Bz}W. Benz,  \emph{Classical Geometries in Modern Contexts} [Geometry of Real Inner Product Spaces],
 Birkh\"auser, Basel-Boston-Berlin, 2005.

\bibitem{Car}C. Carath\'eodory,
\emph{The most general transformations of plane regions which transform circles into circles,}
 Bull. Amer. Math. Soc. 43 (1937), 573-579.

\bibitem{CP}A. Chubarev and I. Pinelis, \emph{Fundamental theorem of geometry without the 1-to-1
 assumption,} Proc. Amer. Math. Soc. 127 (1999), 2735-2744.

 \bibitem{GW}J. Gibbons and C. Webb, \emph{Circle-preserving functions of spheres,}
 Trans. Amer. Math. Soc. 248 (1979), 67-83.

\bibitem{Je}J. Jeffers, \emph{Lost theorems of geometry,} Amer.  Math. Monthly, 107  (2000),
800-812.

\bibitem{LW}B. Li    and Y.  Wang, \emph{Transformations and non-degenerate maps, }
Sci. China Ser. A,  Mathematics, Vol. 48 Supp. (2005), 195-205.

\bibitem{LY}B. K. Li and G. W. Yao, \emph{On characterizations of sphere-preserving
maps,} Math. Proc. Cambridge Philos. Soc. to appear, doi:
10.1017/S0305004109002291, 2009.

\bibitem{Low} R. L\"owen, \emph{A local ``Fundamental Theorem" for
classical topological projective spaces,} Arch. Math. 38 (1982),
286-288.

\bibitem{Ne}Z. Nehari, \emph{Conformal Mapping, } New York,
McGraw-Hill, 1952.

\bibitem{Sch}H. Schwerdtfeger, \emph{Geometry of Complex Numbers,} Mathematical expositions
series, no. 13, Univ. of Toronto Press, Toronto, 1962.

\bibitem{Ste} G. Steinke, \emph{An extension property for classical
topological Benz planes,} Arch. Math. 41 (1983), 190-192.


\bibitem {Yao} G. W. Yao, \emph {On existence of degenerate circle-preserving maps,
}J. Math. Anal. Appl. 334 (2007), 950-953.

\bibitem {Yao1} G. W. Yao, \emph {Transformations  of spheres  without the injectivity
assumption,} preprint, 2008.

\bibitem {Yao2} G. W. Yao, \emph {Fundamental theorem of geometry with the injectivity
 and surjectivity assumptions absent,} in preparation.
\end{thebibliography}
\end{document}